%%%%%%%%%%%%%%%%%%%%%%%% Ams-Style %%%%%%%%%%%%%%%%%%%%%%%%%%%%%%%%%%%
%%%
%%%                   Style and Inputs
%%%%%%%%%%%%%%%%%%%%%%%%%%%%%%%%%%%%%%%%%%%%%%%%%%%%%%%%%%%%%%%%%%%%%%

\documentclass[10pt]{amsart}
\usepackage{bbm}
\usepackage{amsfonts}
\usepackage{amscd}
\usepackage{cases}
\usepackage{amssymb}
\usepackage{amsmath,amssymb,amsfonts,amsthm,graphics,
latexsym, amscd, amsfonts, epsfig, eepic,epic,psfrag}
\usepackage{mathrsfs}
\usepackage{color}
\usepackage{eucal}%    caligraphic-euler fonts: \mathcal{ }
\usepackage{eufrak}%   frak-euler        fonts: \mathfrak{ }
\usepackage[all]{xypic}
\usepackage{xspace}

%%%
%%%
%%%%%%%%%%%%%%%%%%%%%%%%% Pagestyle %%%%%%%%%%%%%%%%%%%%%%%%%%%%%%%%%%%%%%
%%%
%%%
%%%

% spacing between lines

%\hoffset=0truecm
%\voffset=0truecm
\textwidth=15truecm
\textheight=18truecm
\baselineskip=0.8truecm
\overfullrule=0pt
\parskip=0.8\baselineskip
\parindent=0truecm
\topmargin=0.5truecm
\headsep=1.2truecm
%\oddsidemargin=0.5in % options for double-side printouts
%\evensidemargin=0in

%%%
%%%
%%%%%%%%%%%%%%%%%%%% New Settings %%%%%%%%%%%%%%%%%%%%%%%%%%%%%%%%%%%%%%%
%%%
%%%

\theoremstyle{plain}
\newtheorem{theorem}{Theorem}
\newtheorem{corollary}{Corollary}

\newtheorem{lemma}{Lemma}

\theoremstyle{definition}
\newtheorem{definition}{Definition}

\theoremstyle{example}

\theoremstyle{remark}

\numberwithin{equation}{section}

% references alphabetically
% make relation clearer

\begin{document}
%%%%%%%%%%%%%%%%%%%%%%%%%%%%%%%%%%%%%%%%%%%%%%%%%%%%%
\begin{center}
{\bf\Large A bijection between unicellular and bicellular maps}
\\
\vspace{15pt} Hillary S.~W. Han and Christian M. Reidys$^{\,\star}$
\end{center}

\begin{center}
         Department of Mathematics and Computer Science  \\
         University of Southern Denmark, Campusvej 55, \\
         DK-5230, Odense M, Denmark \\
         Phone$^{\,\star}$: 45-24409251 \\
         Fax$^{\,\star}$: 45-65502325 \\
         email$^{\,\star}$: duck@santafe.edu \\
%        Phone: 45-20369907 \\
%        email: hillaryswhan@gmail.com \\
%        Fax: 45-65502325 \\

\end{center}

\centerline{\bf Abstract}
In this paper we present a combinatorial proof of a relation between
the generating functions of unicellular and bicellular maps. 
This relation is a consequence of the Schwinger-Dyson equation of 
matrix theory. Alternatively it can be proved using representation 
theory of the symmetric group.
Here we give a bijective proof by rewiring unicellular maps
of topological genus $(g+1)$ into bicellular maps of genus $g$ and 
pairs of unicellular maps of lower topological genera. 
Our result has immediate consequences for the folding of RNA interaction 
structures, since the time complexity of folding the transformed structure 
is $O((n+m)^5)$, where $n,m$ are the lengths of the respective backbones,
while the folding of the original structure has $O(n^6)$ time 
complexity, where $n$ is the length of the longer sequence.

{\bf Keywords}: unicellular map, bicellular map, topological genus, bijection,
topological recursion

\section{Introduction}

In this paper we present a combinatorial proof of a relation between
the generating functions of unicellular and bicellular maps,
${\bf C}_{g}(z)$ and ${\bf C}_{g}^{[2]}(z)$:
\begin{equation}\label{E:1}
\sum_{g_1=0}^{g+1}\,{\bf C}_{g_1}(z){\bf C}_{g+1-g_1}(z)
+{\bf C}_{g}^{[2]}(z)= {\bf C}_{g+1}(z)/z.
\end{equation}

Eq.~(\ref{E:1}) is a consequence of the Schwinger-Dyson equation of 
matrix theory \cite{Dyson,Schwinger}. It can also be proved by extending 
the representation theoretic framework of Zagier \cite{Zagier}. To the best 
of our knowledge, our bijection represents the first combinatorial proof of 
eq.~(\ref{E:1}).

The motivation for this paper stems from the algorithmic folding problem
of RNA-pseudoknot structures over one and two backbones \cite{gfold,fenix2bb}.
The folding of RNA molecules means to identify some minimum energy 
configuration of a given sequence. These configurations are subject to 
certain constraints on how two nucleotides can bond 
\cite{gfold,fenix2bb,Fenix:08}.
RNA structures over two backbones are called RNA-RNA interaction structures 
\cite{rip1, rip2} and of importance in the context of many biochemical, 
regulatory activities. 

Theorem~\ref{T:thm1} provides a rewiring algorithm transforming bicellular 
maps into certain unicellular maps. It thus allows to reduce the folding 
problem of RNA-RNA interaction structures to that of 
RNA-pseudoknot structures over one backbone, see Fig.~\ref{F:2b-1b}. 
This rewiring is of practical interest, since the time 
complexity of folding the rewired interaction structure is given by 
$O((n+m)^5)$, where $n,m$ are the lengths of the respective backbones.
The direct folding of the interaction structure however has a time 
complexity of $O(n^6)$ where $n$ is the length of the longer sequence.
Since there exist an abundance of ``small RNA'' interactions between 
a large and a very small RNA structure, the $O((n+m)^5)$ time complexity
is oftentimes much smaller than $O(n^6)$ \cite{fenix2bb}.

\begin{figure}[ht]
\begin{center}
\includegraphics[width=0.9\columnwidth]{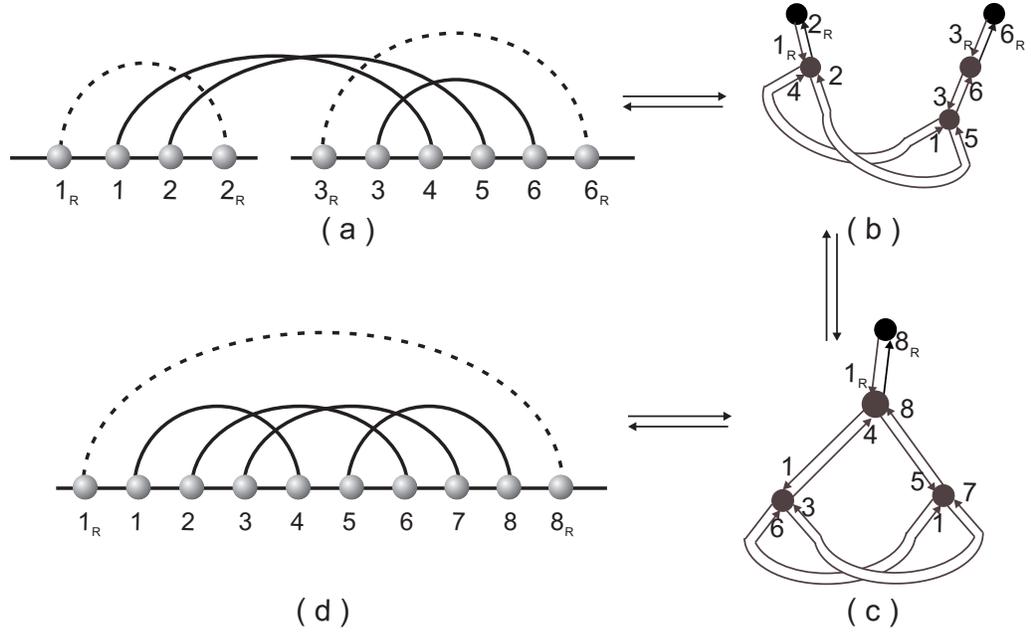}
\end{center}
\caption{\small Rewriting diagram over 2 backbones into diagrams over one
backbone. A diagram over 2 backbones $(a)$, its bicellular map $(b)$
and its corresponding unicellular map $(c)$ and its diagram over one backbone 
$(d)$.}\label{F:2b-1b}
\end{figure}

The paper is organized as follows: first we recall some basic facts
about diagrams, fatgraphs, unicellular and bicellular maps. We shall work
with planted unicellular and double planted bicellular maps. These plants
are additional vertices of degree one and emerge naturally in the context 
of RNA as there is a $5^{'}$ to $3^{'}$ orientation of the molecular backbone. 
Modulo Poincar\'{e}-duality the plant ``marks'' the beginning and ending of 
the backbone. Interestingly, the plants themselves play a key role in the 
combinatorial construction.

Second we dissect the bijection into three separate maps by introducing a certain
partition of the set of unicellular maps of fixed topological genus. Then we 
prove our two main lemmas. These show that, with respect to the above mentioned 
partition, a unicellular map either corresponds uniquely to a pair of unicellular 
maps of lower genus (Lemma~\ref{L:lem1}) or to a unique bicellular map of lower genus
(Lemma~\ref{L:lem2}).

We third prove the main result, Theorem~\ref{T:thm1}, by combining the two 
lemmas and give eq.~(\ref{E:1}) as an enumerative corollary.

\section{Some basic facts}

\subsection{Unicellular maps and bicellular maps}

\begin{definition}
A unicellular map, $u$ with $n$ edges is a triple $u=([2n],\alpha,\sigma)$, where
$\alpha$ is an involution of $[2n]$ without fixed points and $\sigma$ is a
permutation of $[2n]$ such that $\gamma=\alpha\circ\sigma $ has only one cycle.
The elements of $[2n]$ are called half-edges of $u$. The cycles of $\alpha$ and
$\sigma$ are called the edges and the vertices of $u$, respectively. The
permutation $\gamma$ is called the face or boundary component of $u$.
\end{definition}

Given a unicellular map $u=([2n], \alpha, \sigma)$, its associated graph $G$ is
the graph whose edges are given by the cycles of $\alpha$, vertices by the cycles
of $\sigma$.
We can consider a $G$-edge as a ribbon whose two sides are labeled by the
half-edges as follows:
if a half-edge $h$ belongs to a cycle $e$ of $\alpha$ and a certain $v$ of
$\sigma$, then $h$ is the right-hand side of the ribbon corresponding to $e$,
when entering $v$.

We draw the graph $G$ in such a way that around each vertex $v$, the counterclockwise
ordering of the half-edges belonging to the cycle $v$ is given by the cycle $v$. This
ordering of half-edges enriches the combinatorial graph $G$ to a ribbon graph or fat
graph $\mathbb{G}$. Clearly, a fat graph $\mathbb{G}$ with one boundary component is
tantamount to the unicellular map $u$, see Fig.~\ref{F:unicellular}(a).
$\gamma=\alpha\circ\sigma $ is interpreted as the cycle of half-edges visited 
when making the tour of the graph, keeping the graph on its left.

%%%
%%%%%%%%%%%%%%%%%%%%%%%%%%%%%%%%%%%%%%%%%%%%%%%%%%%%%%%%%%%%%%%%%%%%%%%%%%%%%%%%%
%%%
\begin{definition}
A planted unicellular map having $n$ edges is a unicellular map
$u=([2n+2],\alpha,\sigma)$, such that $(1,2n+2)$ is a cycle of
$\alpha$. We shall label the face of $u$ as
$$
\gamma=[1_R,1, 2, \ldots, 2n, 2n_R]
$$
and denote $(2n_R)$ as $p$, the plant of $u$.
\end{definition}
%%%
%%%%%%%%%%%%%%%%%%%%%%%%%%%%%%%%%%%%%%%%%%%%%%%%%%%%%%%%%%%%%%%%%%%%%%%%%%%%%%%%%
%%%

Given a planted unicellular map $u$ the face $\gamma$ induces a linear order
$<_{u}$ on $H$ via:
$$
1_R <_{u} \gamma(1_R)<_{u} \gamma^2(1_R)<_{u} \ldots <_{u} \gamma^{2n-1}(1_R)
<_{u} \gamma^{2n}(1_R)<\gamma^{2n+1}(1_R)=2n_R.
$$

\begin{figure}[ht]
\begin{center}
\includegraphics[width=0.8\columnwidth]{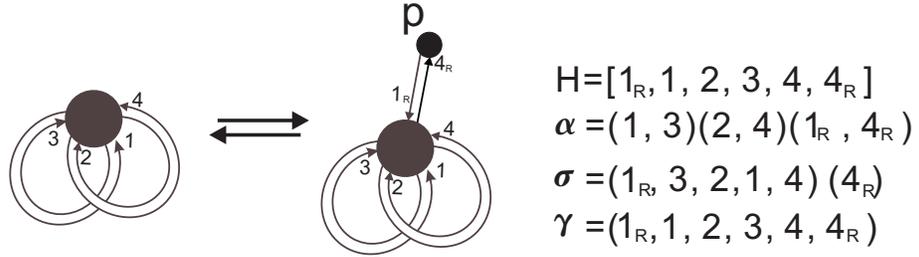}
\end{center}
\caption{\small A unicellular map with one vertex without (left) and with 
plant (right).
The half-edges of a vertex are read counterclockwise, i.e.,
$(1_R,3,2,1,4)$. The vertex $(4_R)$ is the plant.
}\label{F:unicellular}
\end{figure}

Suppose $u$ has $J$ vertices, $v_1, v_2, \ldots, v_J$. Then there is a natural
equivalence relation of half-edges, given by $h \sim \alpha(h)$ and in particular, 
$1_R \sim 2n_R$.

For each vertex $v_j$, $j \in J$, let $\min_u(v_j)$ denote the first half-edge 
where $\gamma$ arrives at $v_j$. We write $v_j$, reading the 
$v_j$-half-edges counter clockwise and starting at $h_j^1=\min(v_j)$:
$$
v_j=(h_j^1, \ldots, h_j^{m_j}).
$$
In particular, the vertex containing the half-edge $1_R$ is denoted by $v_1$. 
The order $<_u$ induces thus a linear order $<_v$ on the vertices by setting 
$v_i<v_j$ iff $\min_u(v_i) < \min_u(v_j)$.

%%%%%%%%%%%%%%%%%%%%%%%%%%%%%%%%%%%%%%%%%%%%%%%%%%%%%%%%%%%%%
%%%%%%%%%%%%%%%%%%%%%%%%%Bicellular map %%%%%%%%%%%%%%%%%%%%%%%%%%%%

%%%
%%%%%%%%%%%%%%%%%%%%%%%%%%%%%%%%%%%%%%%%%%%%%%%%%%%%%%%%%%%%%%%%%%%%%%%%%%%%%%%%
%%%
\begin{definition}
A planted bicellular map $b$ having $n$ edges is a triple $b=(L, \beta, \tau)$,
where $L$ is a set of cardinality $(2n+4)$ such that
$$
L=\{1_R, 1, \ldots, m, m_R, (m+1)_R,  m+1, \ldots,  2n,  2n_R\},  \qquad
1< m < 2n-1.
$$
$\beta$ is a fixed-point free involution containing the cycles $(1_R,m_R)$ and
$((m+1)_R, 2n_R)$. $\beta\circ\tau$ consists of the two cycles
$$
\omega_1=(1_R, 1, 2, \ldots, m, m_R), \quad \text{\rm } \quad
\omega_2=((m+1)_R,  m+1,  m+2, \ldots,  2n,  2n_R).
$$
The elements of $[2n]$ are called half-edges of $b$ and there exists some 
half-edge $x \in \omega_1$, such that $\beta(x) \in \omega_2$.
\end{definition}
%%%
%%%%%%%%%%%%%%%%%%%%%%%%%%%%%%%%%%%%%%%%%%%%%%%%%%%%%%%%%%%%%%%%%%%%%%%%%%%%%%%%%%%%%
%%%
The cycles $\beta \setminus \{(1_R,m_R) \cup ((m+1)_R, 2n_R)\}$ and $\tau \setminus
\{m_R,(2n_R)\}$ are called the {edges} and {vertices} of $b$. $\omega_1$ and
$\omega_2$ are the two {faces} of $b$. The cycles $p_1=(m_R)$ and $p_2=(2n_R)$ are
the two plants, see Fig.\ref{F:bi-example}.
We furthermore assume the following linear order of the half-edges of the two faces
$\omega_1$ and $\omega_2$:
$$
1_R <_{b} \omega_1(1_R)<_{b} \ldots  \omega_1^{m+1}(1_R)=m_R<_{b} (m+1)_R <_{b}
\omega_2((m+1)_R)<_{b} \ldots  \omega_2^{2n-m+1}((m+1)_R)=2n_R.
$$

\begin{figure}[ht]
\centerline{%
\epsfig{file=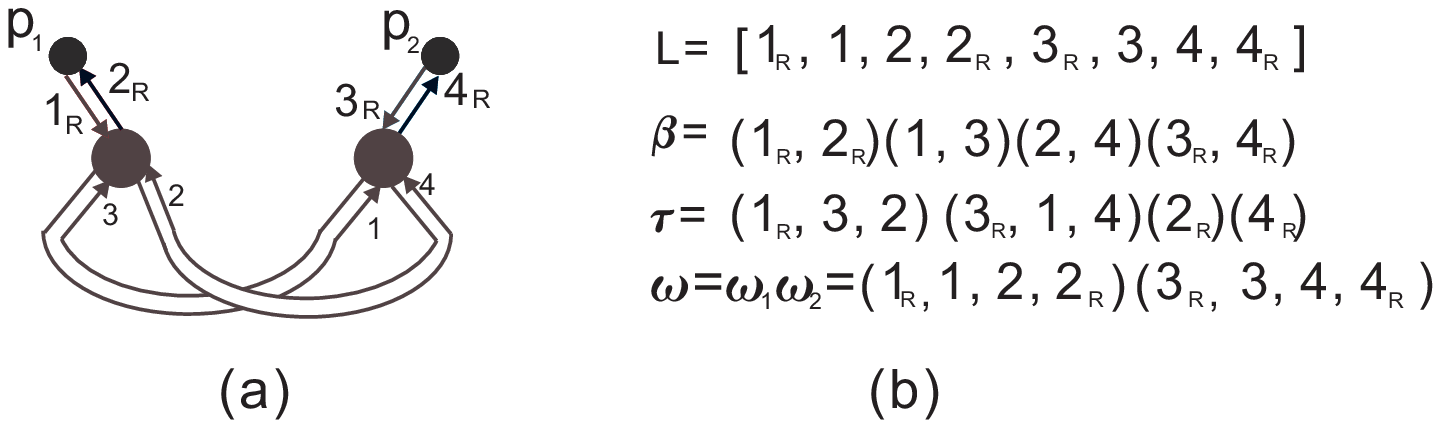,width=0.8 \textwidth}\hskip15pt }
\caption{\small A bicellular map with $2$ edges, $2$ vertices, and genus $0$ as:
(a) ribbon graph; (b) pair of permutations;.
}\label{F:bi-example}
\end{figure}

As in the case of unicellular maps, a bicellular map, $b=(L, \beta, \tau)$, has an
associated connected graph $G^{'}$, whose edges are given by the cycles of 
$\beta \setminus\{(1_R,m_R) \cup ((m+1)_R, 2n_R)\}$, vertices by the cycles of 
$\tau \setminus\{(m)_R,(2n_R)\}$. $G'$ can also be fattened to $\mathbb{G}'$
and the notion of $\min_b(v_j)$ as well as the linear order $<_{v, b}$ on the vertices of 
the bicellular map are defined  analogously.

%%%
%%%%%%%%%%%%%%%%%%%%%%%%%%%%%%%%%%%%%%%%%%%%%%%%%%%%%%%%%%
%%%%

\subsection{The RNA connection}

%%%
%%%%%%%%%%%%%%%%%%%%%%%%%%%%%%%%%%%%%%%%%%%%%%%%%%
%%%

RNA molecules are linear biopolymers consisting of the four nucleotides $A$, $U$, $C$,
and $G$ characterized by a sequence endowed with a unique orientation 
($5^{'}$ to $3^{'}$).
Each nucleotide can interact (base pair) with at most one other nucleotide by
means of specific hydrogen bonds. Only the Watson-Crick pairs $GC$
and $AU$ as well as the wobble $GU$ are admissible. 
RNA structures
can be presented  as {diagrams}, that is a structure with a labeled graph $G$ over the
set $[N]=\{1, 2, \ldots, N \}$  represented  by drawing the vertices
$1, 2, \ldots, N$ on a horizontal line in the natural order and the arcs
$(i, j)$, where $i < j$, in the upper half-plane, see Fig. ~\ref{F:diagramrepre}.
A backbone is a sequence of consecutive integers contained in $[N]$.
A diagram over $b$ backbones is a diagram together with a partition of $[N]$ into $b$
backbones. The cases $b=1$ and $b=2$ are referred to as RNA structures and RNA
interaction structures.
\begin{figure}[ht]
\begin{center}
\includegraphics[width=0.8\columnwidth]{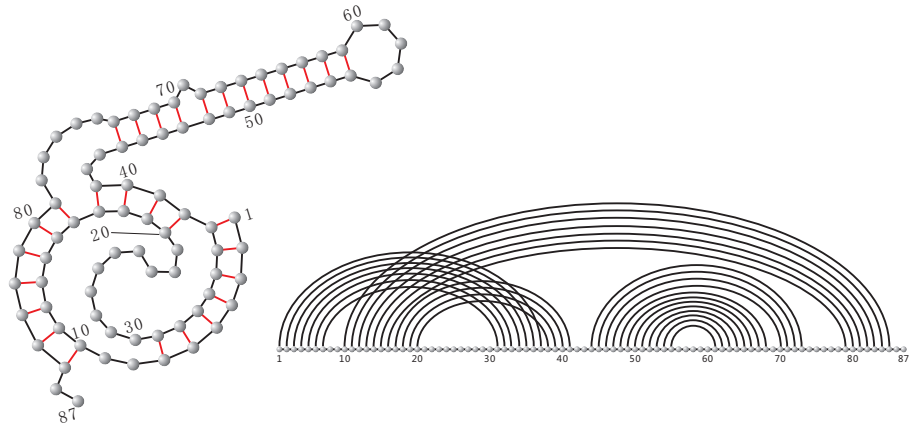}
\end{center}
\caption{\small An RNA structure as a planar graph and as a 
          diagram}\label{F:diagramrepre}
\end{figure}

RNA structures and interaction structures contain more information that just the
set of contacts between nucleotides. Aside form the $5^{'}$ to $3^{'}$ orientation of
the backbone itself there is in addition a fixed ordering of the backbone relative to
the base pairs.
This orientation implies that the contact graph together with the backbone gives rise
to a natural fatgraph structure as shown in Fig.~\ref{F:bc-one}, $\mathbb{G}$
\cite{Loebl:08,Penner:10}. We obtain again the counterclockwise traveling of the 
half-edges around each vertex as for unicellular maps. This fattening works 
analogously for RNA diagrams over two backbones \cite{Reidys:top1,fenix2bb}, 
see Fig.~\ref{F:bc-two}.

Euler's characteristic equation shows that, without affecting the topological type 
of the fatgraph $\mathbb{G}$, one can collapse each backbone into a single vertex 
with the induced fattening. In other words, there is an equivalent fatgraph 
representation of RNA-diagrams having a vertex for each respective backbone. 
Moreover, we may enrich this representation by adding an arc that labels the 
$5^{'}$ to $3^{'}$ end of the backbone. We refer to this arc as rainbow-arc or just 
rainbow.

\begin{figure}[ht]
\begin{center}
\includegraphics[width=0.9\columnwidth]{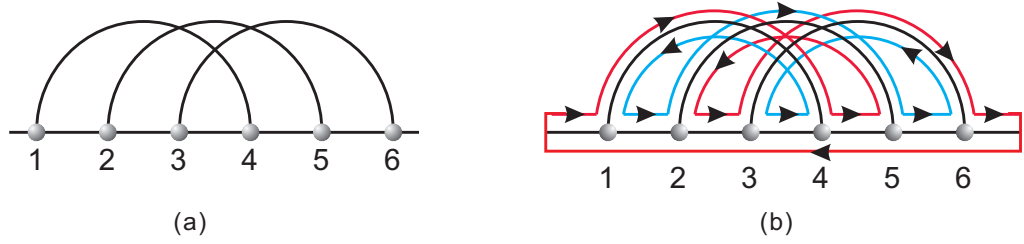}
\end{center}
\caption{\small A diagram with $3$ arcs $(a)$ is fattened $(b)$. $\gamma=\alpha \circ \sigma=
(4,2,6)(1,5,3)$ are its two boundary components (red)(blue).}\label{F:bc-one}
\end{figure}

\begin{figure}[ht]
\begin{center}
\includegraphics[width=0.7\columnwidth]{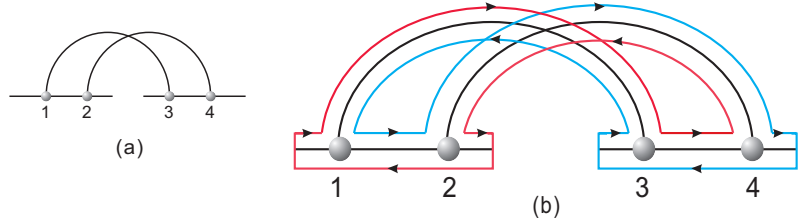}
\end{center}
\caption{\small A diagram with $2$ arcs over two backbones $(a)$ and its fattening $(b)$.
$\omega=\beta \circ \tau=(3, 2)(1, 4)$ are its two 
boundary components (red) (blue).}\label{F:bc-two}
\end{figure}

Clearly the $n$ arcs of the diagram determine after fattening $2n$ halfedges and 
the fatgraph consists of a pair $(\alpha',\sigma')$ together with the additional 
rainbow arc. Then, the mapping
$$
(\sigma',\alpha')\mapsto (\alpha'\circ \sigma',\alpha')
$$
is a bijection mapping vertices into boundary components. Topologically this is the 
Poincar\'{e} dual, mapping a fatgraph over one-backbone with rainbow into an planted 
unicellular map, see Fig.~\ref{F:dual}.

\begin{figure}[ht]
\begin{center}
\includegraphics[width=1.0\columnwidth]{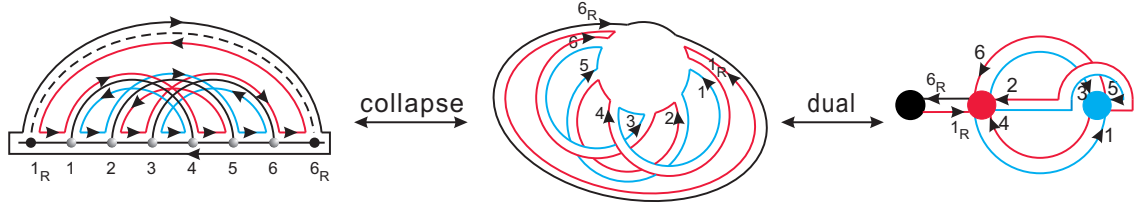}
\end{center}
\caption{\small A fattened diagram over one backbone, backbone collapse and 
the resulting planted, unicellular map.}\label{F:dual}
\end{figure}

The scenario is analogus for RNA-diagrams over two backbones, where we insert two 
rainbows over the respective backbones. Formation of the Poincar\'{e} dual, as 
illustrated in Fig.~\ref{F:dual2} generates a planted bicellular map.

\begin{figure}[ht]
\begin{center}
\includegraphics[width=1.0\columnwidth]{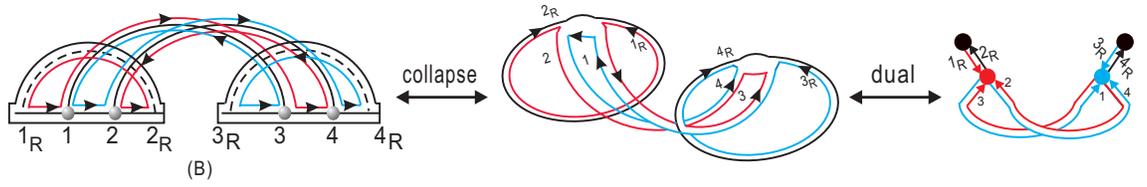}
\end{center}
\caption{\small A fattened diagram over two backbones, backbone collapse and 
the resulting planted, bicellular map. }\label{F:dual2}
\end{figure}

%%%%%%%%%%%%%%%%%%%%%%%%%%%%%%%%%%%%%%%%%%%%%%%%%%%%%%%%%%%%%%%%%%%%%%%%%%
%%%%
\section{Two lemmas}
%%%%
%%%%%%%%%%%%%%%%%%%%%%%%%%%%%%%%%%%%%%%%%%%%%%%%%%%%%%%%%%%%%%%%%%%%%%%%%%%

Let $U_{g+1,n+1}$ denote the set of unicellular maps of genus $(g+1)$ with $(n+1)$ edges.
We observe that $U_{g+1,n+1}$ partitions into the following three classes
\begin{equation}\label{E:U}
U_{g+1,n+1}=U^I_{g+1,n+1}  \dot\cup\;
U^{II}_{g+1,n+1} \dot\cup \; U^{III}_{g+1,n+1},
\end{equation}
where
\begin{itemize}
\item $U_{g+1,n+1}^{I}$,
      the set of unicellular maps of genus $(g+1)$ in which $1$ and $\alpha(1)$
      are incident to two different vertices, such that
$$
\exists \; k;\  1 <  k < \alpha(1); \qquad  \alpha(1) < \alpha(k).
$$
\item $U_{g+1,n+1}^{II}$,
      the set of unicellular maps of genus $(g+1)$ in which $1$ and
      $\alpha(1)$ belong to the same vertex,
$$
\exists \; k;\  1 <  k < \alpha(1); \qquad  \alpha(1) < \alpha(k).
$$      
\item $U_{g+1,n+1}^{III}$,
      the set of unicellular maps of genus $g$ in which $1$ and $\alpha(1)$
      are incident to two different vertices, such that
$$
\forall \;k;\  1 < k <  \alpha(1); \qquad
                          \alpha(k) < \alpha(1).
$$
\end{itemize}

Our first result shows that $U_{g+1,n+1}^{III}$ can be inductively constructed via 
unicellular maps of lower genus. 
Let $u=(H, \alpha, \sigma)\in U^{III}_{g+1,n+1}$ with boundary component
$\gamma=(1_R, 1, 2, \ldots, 2n+1, 2n+2, 2(n+1)_R )$. We relabel
$$
\gamma=\left(1_{R_{u_2}},1_{R_{u_1}},1_{u_1},\dots,(2k)_{u_1},
        (2k)_{R_{u_1}},
         1_{u_2},\dots,(2(n-k))_{u_2}, (2(n-k))_{R_{u_2}}\right),
$$
where $1_{R_{u_1}}=1$ and $(2k)_{R_{u_1}}=\alpha(1)$.
%%%
%%%%%%%%%%%%%%%%%%%%%%%%%%%%%%%%%%%%%%%%%%%%%%%%%%%%%%%%%%%%%%%%%%%%%%%%%%%%%%
%%%
\begin{lemma}\label{L:lem1}
There is a bijection
$$
\theta \colon \dot\bigcup_{0\le g_1\le g+1,\; 0\le j\le n}\left(U_{g_1,j}\times  U_{g+1-g_1,n-j}\right)
                        \longrightarrow U^{III}_{g+1,n+1}.
$$
\end{lemma}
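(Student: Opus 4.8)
\textbf{Proof plan for Lemma~\ref{L:lem1}.}

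The plan is to build $\theta$ explicitly as a ``gluing'' operation and then construct its inverse as the corresponding ``slicing'' operation, verifying at each stage that the genus and edge bookkeeping works out and that the required structural constraints (membership in $U^{III}$, single boundary component, the plant conventions) are preserved. First I would describe the forward map. Given a pair $(u_1,u_2)\in U_{g_1,j}\times U_{g+1-g_1,n-j}$, write their boundary components as $\gamma_1=(1_{R_{u_1}},1_{u_1},\dots,(2j)_{u_1},(2j)_{R_{u_1}})$ and $\gamma_2=(1_{R_{u_2}},1_{u_2},\dots,(2(n-j))_{u_2},(2(n-j))_{R_{u_2}})$. I would insert a new vertex and a new edge whose two half-edges are named $1$ and $\alpha(1)=(2k)_{R_{u_1}}$ with $k=j$: concretely, concatenate the two linear orders in the pattern prescribed by the relabelled $\gamma$ in the statement, namely place $1_{R_{u_2}}$ first, then all of $\gamma_1$, then all of $\gamma_2$, then close up. The new edge $\alpha$-cycle is $(1,\alpha(1))$, the permutation $\sigma$ is obtained from $\sigma_1,\sigma_2$ by the standard ``vertex merge'' that the cyclic reading of the concatenated face forces, and one checks that $\gamma=\alpha\circ\sigma$ is indeed a single cycle. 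The key numerics: the new map has $(n+1)$ edges, and by the Euler-characteristic/genus additivity under this one-edge, one-vertex gluing one gets genus $g_1+(g+1-g_1)=g+1$, so the output lies in $U_{g+1,n+1}$. The defining inequality of $U^{III}$ — that every $k$ with $1<k<\alpha(1)$ satisfies $\alpha(k)<\alpha(1)$, i.e. the arc $(1,\alpha(1))$ ``nests over'' everything strictly inside it — holds precisely because everything between $1$ and $\alpha(1)$ in the new face is exactly the half-edges of $u_1$, which are closed under $\alpha=\alpha_1$ there; and $1,\alpha(1)$ sit on two different vertices because the gluing creates a fresh vertex carrying $1$ while $\alpha(1)$ is attached at (the image of) a vertex of $u_1$. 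I would also check the plant convention is respected: the plant of $u_2$ becomes the global plant $(2(n+1)_R)$, which matches the relabelling in the statement.

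Next I would construct the inverse $\theta^{-1}$. Starting from $u\in U^{III}_{g+1,n+1}$, look at the half-edge $1$ and its partner $\alpha(1)$. Since $u\in U^{III}$, the half-edges strictly between $1$ and $\alpha(1)$ in the linear order $<_u$ form an $\alpha$-invariant set (no arc crosses out of that interval), call it $H_1$, and the remaining non-plant, non-$\{1,\alpha(1)\}$ half-edges form an $\alpha$-invariant set $H_2$. Deleting the edge $(1,\alpha(1))$ and splitting the vertex (or vertices) it touched according to how the face $\gamma$ breaks into two arcs gives two fatgraphs on $H_1$ and $H_2$; one checks each has a single boundary component — this is the reverse of the vertex-merge above — hence each is a (planted) unicellular map, with $j=|H_1|/2$ edges and $n-j$ edges respectively, and their genera $g_1$ and $g+1-g_1$ add up correctly again by the genus formula. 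The relabelling of $\gamma$ given in the statement is precisely the recipe for reading off $u_1$ and $u_2$ from $u$, so $\theta^{-1}$ is well-defined. Finally I would check $\theta\circ\theta^{-1}=\mathrm{id}$ and $\theta^{-1}\circ\theta=\mathrm{id}$, which is essentially immediate once the two constructions are spelled out as inverse ``cut/paste'' operations on the face word, and note that the disjointness of the union on the left is matched by the fact that the decomposition $(H_1,H_2)$ of a given $u$ is unique.

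The main obstacle I expect is the careful verification of the two structural invariants under the gluing: (i) that $\gamma=\alpha\circ\sigma$ really is a \emph{single} cycle after merging (and symmetrically, that the two pieces each have exactly one face after cutting), and (ii) that the genus adds exactly, i.e. no ``extra'' handle is created or destroyed by the edge insertion. Both are best handled by tracking the Euler characteristic: the gluing adds one vertex, one edge, and keeps the face count at $1$, so $\chi$ changes by $0$ from the disjoint union $\chi(u_1)+\chi(u_2)$ up to the correction for joining two components into one, which pins down the genus; the single-face claim then follows from connectivity plus the edge/vertex/face count. A secondary nuisance is pedantic but unavoidable: keeping the plant labels and the induced linear orders $<_u$, $<_v$ consistent on both sides of $\theta$, so that the relabelled $\gamma$ in the statement is literally the image face word. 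None of this requires new ideas beyond the standard topology of ribbon graphs, but the indexing is delicate and must be done with care.
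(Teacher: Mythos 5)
Your plan follows essentially the same route as the paper's proof: define $\theta$ by inserting the face word of $u_1$ between $1_{R_{u_2}}$ and the rest of $\gamma_2$ (equivalently, merging the plant of $u_1$ into the first vertex of $u_2$), verify the genus count by Euler characteristic, deduce membership in $U^{III}$ from the $\alpha$-closedness of the interval between $1$ and $\alpha(1)$, and invert by cutting the edge $\{1,\alpha(1)\}$ and splitting the affected vertex. One small correction: in the glued map $1$ remains on the first vertex of $u_1$ while $\alpha(1)$ (the former plant of $u_1$) is absorbed into the first vertex of $u_2$ --- not, as you write, a fresh vertex carrying $1$ with $\alpha(1)$ on a $u_1$-vertex --- though your conclusion that the two half-edges lie on distinct vertices is still correct.
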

%%%
%%%%%%%%%%%%%%%%%%%%%%%%%%%%%%%%%%%%%%%%%%%%%%%%%%%%%%%%%%%%%%%%%%%%%%%%%%%%%%
%%%
\begin{proof}
We begin by specifying the argument for $\theta$:
\begin{itemize}
\item $u_1=(H_1, \alpha_1, \sigma_1)$, with
$$
\gamma_1=(1_{R_{u_1}}, 1_{u_1}, \ldots, (2k)_{u_1}, (2k)_{R_{u_1}})
$$
and the $J_1$ vertices
$v_{1,u_1},\ldots, v_{i,u_1}, \ldots, v_{J_1, u_1}$, where
$$
v_{J_1, u_1}=(2k)_{R_{u_1}},\qquad v_{1, u_1}=(h_{1, u_1}^1, h_{1,u_1}^2, \ldots,
h_{1,u_1}^{m_1}),
$$
for some $m_1>0$, and $h_{1, u_1}^1=1_{R_{u_1}}$ and $h_{1, u_1}^{m_1}=(2k)_{u_1}$.
\item $u_2=(H_2, \alpha_2, \sigma_2)$, with boundary component
$$
\gamma_2=
     (1_{R_{u_2}}, 1_{u_2}, \ldots, (2(n-k))_{u_2}, (2(n-k))_{R_{u_2}})
$$
and the $J_2$ vertices, $v_{1, u_2}
      \ldots v_{i, u_2} \ldots v_{J_2, u_2}$, where
$$
v_{J_2, u_2}=(2(n-k))_{R_{u_2}}, \qquad v_{1,u_2}=
      (h_{1,u_2}^{1}, \ldots, h_{1,u_2}^{m_2}),
$$
for some $m_2>0$ and $h_{1,u_2}^{1}=1_{R_{u_2}}$ and $h_{1, u_2}^{m_2}=(2(n-k))_{u_2}$.
\end{itemize}

For $u_1$ $u_2$, consider the two vertices $v_{J_1, u_1}$ (the plant of $u_1$)
and $v_{1,u_2}$. The key operation consists in ``gluing'' $v_{J_1, u_1}$ into
$v_{1,u_2}$, thereby producing the new vertex
$$
w_{1,u_2}=(h_{1,u_2}^1,(2k)_{R_{u_1}}, h_{1,u_2}^2, \ldots, h_{1, u_2}^{m_2}).
$$
By construction, this produces the unicellular map, $\theta(u_1,u_2)$,
with boundary component
$$
\gamma=\left(1_{R_{u_2}},\underbrace{1_{R_{u_1}},1_{u_1},\dots,(2k)_{u_1},
        (2k)_{R_{u_1}}}_{\gamma_1},
         1_{u_2},\dots,(2(n-k))_{u_2}, (2(n-k))_{R_{u_2}}\right)
$$
and vertex set
$$
\{v_{i,u_1} \mid 1\le i <  J_1\}\dot\cup \{w_{1,u_2}\}\dot\cup
\{v_{i,u_2} \mid 2\le i\le J_2\}.
$$
The combinatorial interpretation of this ``gluing''  is illustrated in
Fig.~\ref{F:proof-glue-slice}.

\begin{figure}[ht]
\begin{center}
\includegraphics[width=0.9 \columnwidth]{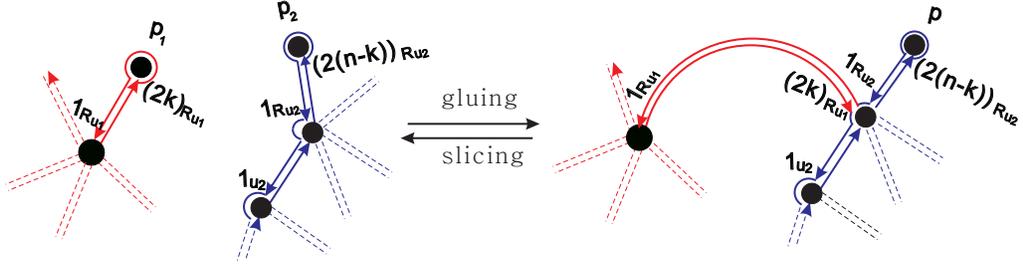}
\end{center}
\caption{\small The "proof" of Lemma~\ref{L:lem1}.}\label{F:proof-glue-slice}
\end{figure}

%%%
%%%%%%%%%%%%%%%%%%%%%%%%%%%%%%%%%%%%%%%%%%%%%%%%%%%%%%%%%%%%%%%%%%%%%%%%%%
%%%

We next observe that $\theta(u_1,u_2)$ has genus $g_1+g_2=g+1$. Namely, we have
$2-2g_1=(v_{J_1}-1)-k+1$ and $2-2g_2=(v_{J_2}-1)-(n-k)+1$,
i.e.~$2-2(g_1+g_2) = (v_{J_1}+v_{J_2}-2) - (n+1) + 1$. Since the $u_1$-plant
becomes an edge, $\theta(u_1,u_2)$ satisfies
$$
2-2(g+1) = (v_{J_1}+v_{J_2}-2) - (n+1) +1,
$$
whence $\theta(u_1,u_2)$ has genus $g+1=(g_1+g_2)$. 
Consequently we have established the mapping
$$
\theta \colon \dot\bigcup_{0\le g_1\le g+1}\left(U_{g_1,j}\times  U_{g+1-g_1,n-j}\right)
\longrightarrow U^{III}_{g+1,n+1}.
$$

%%%%%%%%%%%%%%%%%%%%%%%%%%%%%%%%%%%%%%%%%%%%%%%%%%%%%%%%%%%%%%%%%%%%%%%%%%%%%%%%%%%%%%%%%
We proceed by showing that $\theta$ is bijective by explicitly specifying its inverse.
To this end let $u=(H, \alpha, \sigma)\in U_{n+1,g+1}^{III}$ with the plant
$p$ and face
$$
\gamma=\left(1_{R_{u_2}},1_{R_{u_1}},1_{u_1},\dots,(2k)_{u_1},
        (2k)_{R_{u_1}},
         1_{u_2},\dots,(2(n-k))_{u_2}, (2(n-k))_{R_{u_2}}\right),
$$
where $1_{R_{u_1}}=1$ and $(2k)_{R_{u_1}}=\alpha(1)$, having $J$ cycles $v_1, v_2,
\ldots, v_J$. By assumption, $\{1_{R_{u_1}},\alpha(1_{R_{u_1}})\}$ is incident to
two different vertices, such that
$$
\forall \;k_{u_1};\  1_{R_{u_1}} < k_{u_1} <  \alpha(1_{R_{u_1}}); \qquad
                          \alpha(k_{u_1}) < \alpha(1_{R_{u_1}}).
$$
%%%%%%%%%%%%%%%%%%%%%%%%%%%%%%%%%%%%%%%%%%%%%%%%%%%%%%%%%%%%%%%%%%%%%%%%%%%%%%%%%%%%%%%%%

We set $v_1=(h_{1}^1, h_{1}^2, h_{1}^3, \ldots, h_{1}^{m})$ with $\min(v_1)=h_1^1=1_{R_{u_2}}$ and
$v_2=(h_{2}^1, h_{2}^2, \ldots, h_{2}^{m^{'}})$ for some $m$ and $m^{'}$ with 
$\min(v_2)=1_{R_{u_1}}$, see Fig.~\ref{F:relation1}.

%%%
%%%%%%%%%%%%%%%%%%%%%%%%%%%%%%%%%%%%%%%%%%%%%%%%%%%%%%%%%%%%%%%%%%%%%%%%%%%%%%%%%%%%%%%%%
%%%
\begin{figure}[ht]
\begin{center}
\includegraphics[width=0.5\columnwidth]{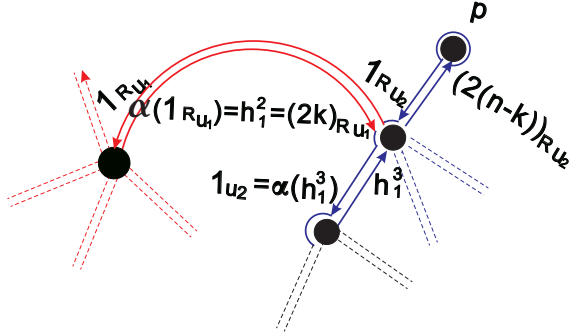}
\end{center}
\caption{\small Constructing the inverse of $\theta$.}\label{F:relation1}
\end{figure}
%%%
%%%%%%%%%%%%%%%%%%%%%%%%%%%%%%%%%%%%%%%%%%%%%%%%%%%%%%%%%%%%%%%%%%%%%%%%%%%%%%%%%%%%%%%%%
%%%

Then $\alpha(1_{R_{u_1}})=h_{1}^2=(2k)_{R_{u_1}}$, and $\alpha(1_{R_{u_1}})<_u \alpha(h_{1}^3)$.
Since for any $1_{R_{u_1}} < k_{u_1} <\alpha(1_{R_{u_1}})$ we have
$\alpha(k_{u_1}) < \alpha(1_{R_{u_1}})$, the boundary
component $\gamma$ contains a sequence of half-edges
$$
(1_{R_{u_1}},1_{u_1},\dots,(2k)_{u_1},(2k)_{R_{u_1}}).
$$
Let $H_1=\{1_{R_{u_1}},1_{u_1},2_{u_1},\dots,\alpha(1_{R_{u_1}})\}$ and $H_2$ its complement. Then
$\alpha$ induces by restriction two mappings
$$
\alpha|_{H_1} \colon H_1 \longrightarrow  H_1,\qquad
\alpha|_{H_2} \colon H_2 \longrightarrow  H_2.
$$
In particular $\alpha(1_{R_{u_1}})$ is even.

We now introduce the mapping $\psi$ obtained by ``cutting'' the edge
$\{1_{R_{u_1}},\alpha(1_{R_{u_1}})\}$ in $u$.
To this end we introduce the two new vertices
\begin{eqnarray*}
v_{1, u_2}   & = & (h_{1}^1,  h_{1}^3, \ldots, h_{1}^{m}) \\
v_{J_1, u_1} & = & (h_{1}^2).
\end{eqnarray*}

Now, taking the union of $v_{J_1, u_1}$  with all $u$-vertices that are cycles of
half-edges contained in $H_1=\{1_{R_{u_1}},1_{u_1},2_{u_1},\dots,\alpha(1_{R_{u_1}})\}$
and restricting $\alpha$ to $H_1$, generates a new map $u_1$.
$u_1$ is evidently unicellular since $(1_{R_{u_1}},1_{u_1},2_{u_1},\dots,
\alpha(1_{R_{u_1}}))$ is its unique boundary component.

We next replace $v_1$ by $v_{1, u_2}= (h_{1}^1,  h_{1}^3, \ldots, h_{1}^{m})$.
Then the set of all $u$-vertices different from $v_1$ that are contained in
$H_2=H\setminus H_1$ and $v_{1, u_2}$, together with the restriction of $\alpha$ to
$H_2$ form a new map, $u_2$ . The latter is by construction unicellular and
its boundary component is given by
$$
\gamma_2 = (1_{R_{u_2}},1_{u_2},\dots,(2(n-k))_{u_2},(2(n-k))_{R_{u_2}}).
$$
Considering the Euler characteristics we can conclude $(g+1)= g_1+g_2$.

By construction,
$$
\psi\circ \theta=\text{\rm id}_{
\dot\bigcup_{0\le g_1\le g+1,0\le j\le n}\left(U_{g_1,j}\times  U_{g+1-g_1,n-j}\right)}
$$
and
$$
\theta\circ \psi=\text{\rm id}_{U^{III}_{g+1,n+1}},
$$
whence $\theta$ is a bijection.
\end{proof}

%%%
%%%%%%%%%%%%%%%%%%%%%%%%%%%%%%%%%%%%%%%%%%%%%%%%%%%%%%%%%%%%%%%%%%
%%%

The second result relates bicellular maps and unicellular maps of types $I$
and $II$; the key idea is analogous to that in Lemma~\ref{L:lem1}.

Let $B_{g,n}$ denote the set of bicellular map of genus $g$ with $n$ edges.
We observe that $B_{g,n}$ can be written as
$B_{g,n} = B_{g,n}^I\dot\cup\; B_{g,n}^{II}$, where $B_{g,n}^I$ denotes the set of bicellular
maps of genus $g$ with $n$ edges in which the two plants $p_1$ and
$p_2$ are incident to two different vertices and $B_{g,n}^{II}$
denotes its complement. 

Furthermore, for $b\in B_{g,n}^I \dot\cup B_{g,n}^{II}$, we have
\begin{equation}\label{E:b1}
\exists \; x \in \omega_1; \ \beta(x)\in \omega_2.
\end{equation}
Let now $u=(H,\alpha,\sigma)$ be a unicellular map of genus $(g+1)$ having $(n+1)$
edges with boundary component
$$
\gamma=(1_R,1,2,\ldots,2n, (2n+1), (2n+2), (2n+2)_R).
$$
We shall relabel $\gamma$ as
$$
\gamma=\left((m+1)_{R_b},1_{R_b},1_b,\dots, m_b, m_{R_b},
(m+1)_b, \dots, (2n)_b, (2n)_{R_b}  \right).
$$
%%%
%%%%%%%%%%%%%%%%%%%%%%%%%%%%%%%%%%%%%%%%%%%%%%%%%%%%%%%%%%%%%%%%%%%%%%%%%%%%%%
%%%
\begin{lemma}\label{L:lem2}
There exists a bijection
$$
\eta \colon B^{I}_{g, n}\dot\cup \; B^{II}_{g, n}
\longrightarrow U^{I}_{g+1,n+1}\dot\cup \; U^{II}_{g+1,n+1},
$$
and $\eta$ induces by restriction the two bijections
$$
\eta_I \colon B^{I}_{g, n} \longrightarrow U^{I}_{g+1,n+1}\quad\text{\rm and}\quad
\eta_{II} \colon B^{II}_{g, n} \longrightarrow U^{II}_{g+1,n+1}.
$$
\end{lemma}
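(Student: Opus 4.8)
The plan is to mimic the construction in Lemma~\ref{L:lem1}, exchanging the roles of ``pair of unicellular maps'' and ``bicellular map''. Given $b = (L, \beta, \tau) \in B^I_{g,n} \dot\cup B^{II}_{g,n}$ with faces $\omega_1 = (1_{R_b}, 1_b, \dots, m_b, m_{R_b})$ and $\omega_2 = ((m+1)_{R_b}, (m+1)_b, \dots, (2n)_b, (2n)_{R_b})$, the first step is to ``merge'' the two faces into one by removing the plant $p_1 = (m_{R_b})$ and splicing $\omega_2$ into $\omega_1$ at the position previously occupied by the plant; concretely, the half-edge $(m+1)_{R_b}$ becomes the second-smallest half-edge of a single new boundary component, so that the resulting permutation $\gamma = \alpha\circ\sigma$ has exactly one cycle
$$
\gamma = \left((m+1)_{R_b}, 1_{R_b}, 1_b, \dots, m_b, m_{R_b}, (m+1)_b, \dots, (2n)_b, (2n)_{R_b}\right).
$$
This is exactly the relabelling announced just before the lemma statement. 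The underlying involution $\alpha$ is obtained from $\beta$ by turning the plant-pair $(1_{R_b}, m_{R_b})$ into a genuine edge (so $\alpha(1) = \alpha(1_{R_b}) = m_{R_b}$); the vertex containing $m_{R_b}$ in $b$ is glued, as in Lemma~\ref{L:lem1}, onto the vertex carrying $(m+1)_{R_b}$. I would set $u = \eta(b)$ to be this map.

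Next I would verify that $u$ is well-defined and lands in $U^I_{g+1,n+1}\dot\cup U^{II}_{g+1,n+1}$. Unicellularity is immediate by construction. For the genus count, I would use Euler's formula on each of the two faces separately: if $b$ has $V$ vertices then $2 - 2g = V - n + 2$ for a bicellular map (two faces), and promoting the plant $p_1$ to an edge while merging the two faces into one boundary component changes this to $2 - 2(g+1) = (V\pm 1) - (n+1) + 1$, so the new genus is exactly $g+1$; this is the same bookkeeping used at the end of the proof of Lemma~\ref{L:lem1}. The crucial point is that the condition \eqref{E:b1}, that there is a half-edge $x \in \omega_1$ with $\beta(x) \in \omega_2$, forces $u \notin U^{III}_{g+1,n+1}$: such an $x$ gives a $k$ with $1 < k < \alpha(1)$ and $\alpha(1) < \alpha(k)$, i.e. $u$ satisfies the ``$\exists k$'' condition defining types $I$ and $II$ rather than the ``$\forall k$'' condition of type $III$. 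Moreover $1$ and $\alpha(1)$ lie on different vertices in $u$ iff the two plants $p_1, p_2$ lay on different vertices in $b$, which is precisely the distinction between $B^I$ and $B^{II}$ versus $U^I$ and $U^{II}$; hence $\eta$ respects the decomposition and $\eta(B^I_{g,n}) \subseteq U^I_{g+1,n+1}$, $\eta(B^{II}_{g,n}) \subseteq U^{II}_{g+1,n+1}$.

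For bijectivity I would construct the inverse $\zeta$ explicitly, reversing the slice in exactly the manner of the $\psi$ in Lemma~\ref{L:lem1}. Given $u \in U^I_{g+1,n+1}\dot\cup U^{II}_{g+1,n+1}$ with the relabelled face above, the defining condition ($\exists k$ with $1 < k < \alpha(1)$ and $\alpha(1) < \alpha(k)$) guarantees that cutting the edge $\{1, \alpha(1)\} = \{1_{R_b}, m_{R_b}\}$ and then splitting the boundary component at the half-edge $(m+1)_{R_b}$ produces \emph{two} cycles $\omega_1, \omega_2$ rather than one — this is the mirror image of the fact that, in type $III$, the same cut yields two unicellular maps; I would verify that the resulting $\beta$ (with the two plant-pairs $(1_{R_b}, m_{R_b})$ and $((m+1)_{R_b},(2n)_{R_b})$ reinstated) satisfies $\beta \circ \tau = \{\omega_1, \omega_2\}$ and that the cross-face condition \eqref{E:b1} holds automatically because the $k$ witnessing membership in type $I$/$II$ becomes the witnessing $x$. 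Then $\zeta\circ\eta = \mathrm{id}$ and $\eta\circ\zeta = \mathrm{id}$ by inspection, and since $\eta$ maps $B^I$ into $U^I$ and $B^{II}$ into $U^{II}$ while $\zeta$ maps $U^I$ into $B^I$ and $U^{II}$ into $B^{II}$, the restricted maps $\eta_I, \eta_{II}$ are bijections as claimed.

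I expect the main obstacle to be the careful verification that the ``splitting'' operation in $\zeta$ really produces two \emph{valid} faces with the required plant structure and the order conditions on the half-edges, i.e. that the type $I$/$II$ hypothesis on $u$ is exactly strong enough to guarantee $\zeta(u)$ is a genuine bicellular map (in particular that $1 < m < 2n-1$ and that the cross-face half-edge exists). This is the analogue of the delicate step in Lemma~\ref{L:lem1} where one checks that the sequence $(1_{R_{u_1}}, 1_{u_1}, \dots, (2k)_{R_{u_1}})$ is a full boundary component of a sub-map; here it must be shown that the complementary arc, after cutting, breaks into precisely two arcs. Everything else — the genus computation via Euler characteristic and the mutual-inverse checks — is routine bookkeeping once this structural claim is in place.
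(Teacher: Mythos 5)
Your construction is exactly the paper's: you glue the plant $p_1=(m_{R_b})$ into the vertex carrying $(m+1)_{R_b}$ to merge $\omega_1$ and $\omega_2$ into the single face $\gamma$, verify the genus shift by the same Euler-characteristic bookkeeping (the vertex count is in fact unchanged, not $V\pm1$, since the plant is absorbed while the rainbow pair becomes an edge), use the cross-face condition to place $\eta(b)$ in $U^I\dot\cup\,U^{II}$ and the $v_{1,b}=v_{p_2,b}$ dichotomy to separate types, and invert by cutting $\{1_{R_b},m_{R_b}\}$ and splicing $m_{R_b}$ off as its own plant vertex. The step you flag as the main obstacle --- that the cut yields exactly the two faces $\omega_1,\omega_2$ with the required plant structure --- is precisely what the paper checks (briefly) by reading off the boundary components, so the proposal is correct and follows essentially the same route.
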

%%%
%%%%%%%%%%%%%%%%%%%%%%%%%%%%%%%%%%%%%%%%%%%%%%%%%%%%%%%%%%%%%%%%%%%%%%%%%%%%%%
%%%

\begin{figure}[ht]
\centerline{%
\epsfig{file=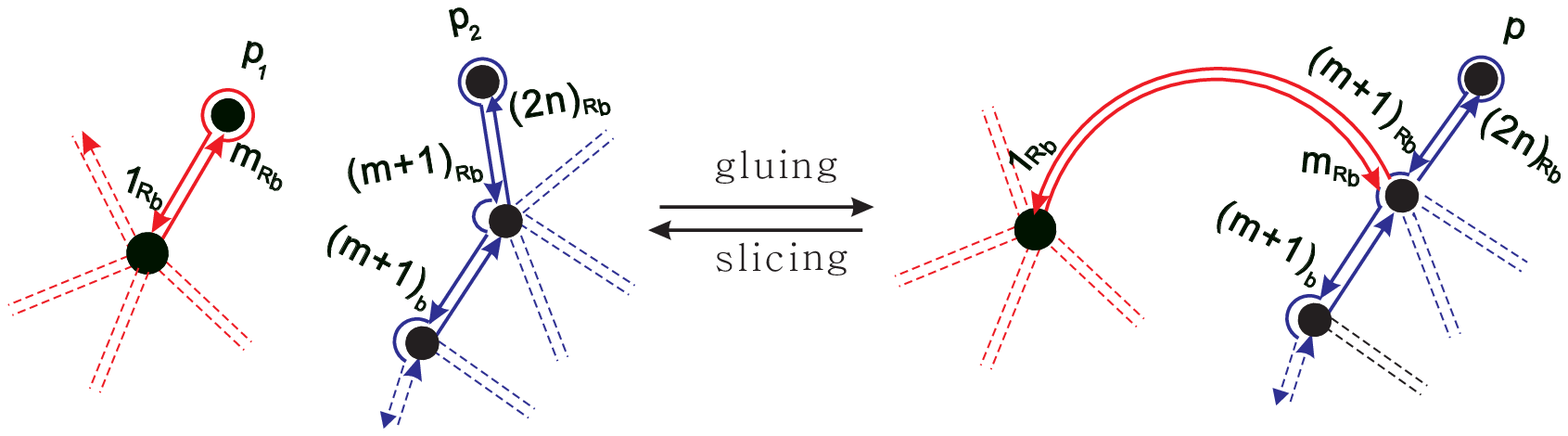,width=0.9 \textwidth}\hskip15pt }
\caption{\small Lemma~\ref{L:lem2}: gluing and slicing, the case $B^{I}_{g, n+1}$.
}\label{F:type-b}
\end{figure}

\begin{figure}[ht]
\centerline{%
\epsfig{file=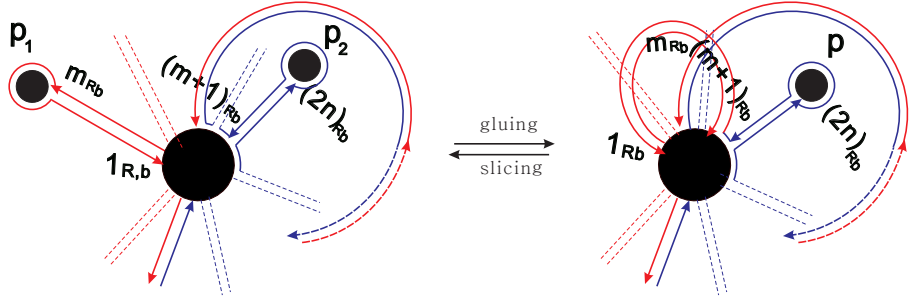,width=0.8 \textwidth}\hskip15pt }
\caption{\small Lemma~\ref{L:lem2}: gluing and slicing, the case $B^{II}_{g, n+1}$.
}\label{F:type-c}
\end{figure}

\begin{proof}
Let $b=(L,\beta,\tau)$ be a planted bicellular map of genus $g$ having $n$ edges with
plants $p_1$ and $p_2$ and tour $\beta \circ \tau=\omega_1\circ \omega_2$. Let
$$
\omega_1=(1_{R_b}, 1_b, \ldots, m_b, m_{R_b}) \quad\text{\rm and}\quad
\omega_2=((m+1)_{R_b}, (m+1)_b, \ldots, 2n_b, 2n_{R_b})
$$
and $v_{i, b}$, for $1\leq i \leq J$ be the set of its vertices.

Consider the two vertices $(m_{R_b})$ and  $v_{p_2, b}$, where $(m_{R_b})$ is
the plant $p_1$ and $v_{p_2, b}$ denote the cycle containing half-edge 
$(m+1)_{R_b}$. I.e.~we have
$$
v_{p_2, b}= (h^1_{p_2, b}, h^2_{p_2, b}, \ldots, h^l_{p_2, b}), \quad
\text{for \quad} l > 0.
$$
where $h^1_{p_2, b}=\min v_{p_2, b}$.
Note that if $h^1_{p_2, b}= \min v_{p_2, b} = (m+1)_{R_b}$, then  $v_{p_2, b}$ is
different from $v_{1,b}=(1_{R_b},\dots)$, whence $b \in B_{g,n}^{I}$ and if
$h^1_{p_2, b}= 1_{R_b}$, then $v_{p_2, b}=v_{1,b}$ and consequently $b \in B_{g,n}^{II}$.

The key operation consists in "gluing " $p_1=(m_{R_b})$ into $v_{p_2, b}$. This generates
the unicellular map, $\eta(b)$, with boundary component
\begin{equation}\label{E:bc-eta(b)}
\gamma=\left((m+1)_{R_b},\underbrace{1_{R_b},1_b,\dots, m_b, m_{R_b}}_{\omega_1}, (m+1)_b,
\dots, (2n)_b, (2n)_{R_b}  \right)
\end{equation}
and the new vertex
$$
w_{p_2, b}=(h_{p_2, b}^1,  m_{R_b}, h_{p_2, b}^2, \ldots,
h^l_{p_2, b}) \quad \text{where} \quad h^1_{p_2, b}= (m+1)_{R_b},
$$
obtained by gluing $(m_{R_b})$ into $v_{p_2, b}$. Accordingly, $\eta(b)$ has vertex set
$$
(\{v_{i,b} \mid 1\leq i \leq J\} \dot\cup  \{w_{p_2, b}\})\setminus\{v_{p_2, b}, (m_{R_b})\}.
$$
Note that in case of $v_{1,b} \neq v_{p_2, b}$, the gluing does not merge these 
$b$-vertices.

Suppose now $b \in B^{I}_{g, n}$. By definition there exists some 
$x \in \omega_1$ such that $\beta(x)\in \omega_2$. 
Thus there exists some $1_{R_b}< x < m_{R_b}$, such that $(m+1)_{R_b}<\beta(x)<2n_{R_b}$.
Gluing produces the unicellular map $\eta(b)$ whose boundary component is
given in eq.~\ref{E:bc-eta(b)}. We observe that the half-edges $1_{R_b}$ and $m_{R_b}$ 
map exactly to the half-edges $1$ and $\alpha(1)$ in $\eta(b)$ and furthermore, 
the half-edges $(m+1)_b$ to $2n_{R_b}$ map to $\eta(b)$-halfedges that are all 
greater than $\alpha(1)$. Consequently, there exists some $\eta(b)$ half-edge, 
$1< x <\alpha(1)$ such that $\alpha(x) >\alpha(1)$, whence $\eta(b) \in U^{I}_{g+1, n+1}$.

The case of $b \in B^{II}_{g, n}$ is analogous. Then there also exists some $1< x 
<\alpha(1)$ such that $\alpha(x) >\alpha(1)$ holds, whence $\eta(b) \in U^{II}_{g+1, n+1}$.
In Fig.~\ref{F:type-b} and Fig.~\ref{F:type-c} we depict what happens if
we glue $(m_{R_b})$ into $v_{p_2, b}$ in these respective cases.

We next inspect that $\eta(b)$ has genus $(g+1)$. Indeed, $b$ satisfies
$2-2g=v_{J}-n+2$, i.e.~$2-2(g+1)=v_{J}-(n+1)+1$. Since the gluing transforms 
the $p_1$ plant into an edge, the latter equation shows that $\eta(b)$ has genus $(g+1)$.

We have thus shown that there exists a welldefined mapping
$$
\eta \colon B^{I}_{g, n}\dot\cup \; B^{II}_{g, n}
\longrightarrow U^{I}_{g+1,n+1}\dot\cup \; U^{II}_{g+1,n+1},
$$
that induces by restriction the mappings
$$
\eta_I \colon B^{I}_{g, n} \longrightarrow U^{I}_{g+1,n+1}\quad\text{\rm and}\quad
\eta_{II} \colon B^{II}_{g, n} \longrightarrow U^{II}_{g+1,n+1}.
$$

%%%
%%%%%%%%%%%%%%%%%%%%%%%%%%%%%%%%%%%%%%%%%%%%%%%%%%%%%%%%%%%%%%%%%%%%%%%%%%%%%%%%%%%%%%%
%%%

We next construct the inverse of $\eta$.
To this end, let $u=(H, \alpha, \sigma)$ be a unicellular map of genus $g$ having 
$(n+1)$ edges with plant $p$ and face
\begin{equation}\label{E:bc-u}
\gamma=\left( (m+1)_{R_b}, 1_{R_b}, 1_b, \dots, m_b, m_{R_b}, (m+1)_b, \dots, (2n)_b, (2n)_{R_b}
       \right),
\end{equation}
where $1_{R_{b}}=1_u$, $\alpha(1_u)= m_{R_b} $ and $\sigma$ having the 
$K$ cycles $v_{1, u}, v_{2, u},\ldots, v_{K, u}$.
%%%%%%%%%%%%%%%%%%%%%%%%%%%%%%%%%%%%%%%%%%%%%%%%%%%%%%%%%%%%%%%%%%%%%%%%%%%%%%%%%%%%%%%%

\begin{figure}[ht]
\begin{center}
\includegraphics[width=0.7\columnwidth]{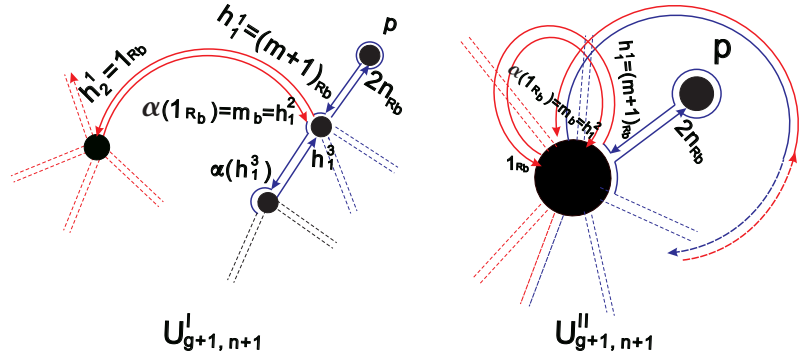}
\end{center}
\caption{\small Constructing the inverse of $\eta$.}\label{F:relation2}
\end{figure}

Suppose first $u \in U_{g+1,n+1}^{I}$, then $\{1_{R_{b}},\alpha(1_{R_{b}})\}$ is 
incident to two different vertices. We set
$$
v_1=(h_{1}^1, h_{1}^2, h_{1}^3, \ldots, h_{1}^{j})
$$
where $h_1^1=(m+1)_{R_b}$ and
$$
v_{2}=(h_{2}^1, h_{2}^2, \ldots, h_{2}^l),
$$
where by definition $h_{2}^1=1_{R_{b}}$ and $\alpha(1_{R_{b}})=m_{R_b}$.
Consequently, we have
$$
h_1^2=\sigma(h_1^1)= (\alpha \circ \gamma) (h_1^1)= \alpha (1_{R_b})=m_{R_b}.
$$
Second let $u \in U^{II}_{g+1, n+1}$. Then $\{1_{R_{b}},\alpha(1_{R_{b}})\}$
is incident to
$$
v_1=(h_{1}^1, h_{1}^2, h_{1}^3, \ldots, h_{1}^{j}),
$$
such that $h_1^1=(m+1)_{R_{b}}$ and
$$
h_1^2= (\alpha \circ \gamma)(h_1^1 ) = \alpha(1_{R_b}) = m_{R_b}.
$$
Since $\alpha(1_{R_{b}})$ and $1_{R_{b}}$ are incident to the same vertex and 
$h_1^2=\alpha(1_{R_b})$, we can conclude that $1_{R_{b}}$ is a half-edge of $v_1$.
In other words, there exist some half-edge $h_{1}^{a}$ (namely $h_{1}^{a}=1_{R_b}$), for
some $1< a <j$, such that $\alpha(h_{1}^a)=m_{R_{b}}$.

We now introduce the mapping $\varsigma$ obtained by ``cutting'' the edge $\{1_{R_b},
\alpha(1_{R_b})\}$ in $u$. That is, $\varsigma$ splices $v_1$ into the two vertices
\begin{eqnarray*}
v_{p_2, b}   & = & (h_{1}^1,  h_{1}^3, \ldots, h_{1}^{j}) \\
p_1 & = & (h_{1}^2).
\end{eqnarray*}
This process generates the new vertex set
$$
\{v_{i} \mid 1< i \leq K \}\cup \{v_{p_2, b}\} \cup \{ p_1\}.
$$
Since $\alpha(1_{R_{b}})=m_{R_b}$, the sequence of half-edges
$$
\omega_1=(1_{R_b},1_b,2_{b},\dots,\alpha(1_{R_{b}}))
$$
and
$$
\omega_2=\left( (m+1)_{R_b}, (m+1)_b, \dots, (2n)_b, (2n)_{R_b}\right),
$$
represent the two boundary components of the new map. \\
Furthermore, since for $u \in U^{I}_{g+1,n+1}\dot\cup \; U^{II}_{g+1,n+1}$, 
there exists some $k$, $1 <  k < \alpha(1)$, such that $\alpha(1) < \alpha(k)$. 
Since $u$ has the boundary component given in eq.~(\ref{E:bc-u}), we have 
$1=1_{R_b}$ and $\alpha(1)=m_b$. Accordingly, there exists some
$1_{R_{b}} <  k < \alpha(1_{R_{b}})$, such that $\alpha(1_{R_{b}}) < \alpha(k)$.

$\varsigma(u)$ has the two new boundary components  $\omega_1$ and $\omega_2$, 
i.e.~there exist some $k \in \omega_1$, such that $\beta(k) \in \omega_2$ and
$\varsigma(u)$ is bicellular map. By construction, if $u \in U^{I}_{g+1,n+1}$,
then $\varsigma(u)$ has its two plants incident to two distinct vertices, whence
$ \varsigma(u)\in B^{I}_{g,n}$. In case of $u \in U^{II}_{g+1,n+1}$ then
$\varsigma(u)$ has its two plants incident to one vertex and $\varsigma(u) \in
B^{II}_{g,n}$.

Euler's characteristic formula implies that $g(u) = g(\varsigma(u))+1$. 
Furthermore, by construction,
$$
\varsigma\circ \eta =\text{\rm id}_{B_{g,n}} \quad\text{\rm and} \quad
\eta\circ \varsigma =\text{\rm id}_{{U^{I}_{g+1,n+1}} \dot\cup {U^{II}_{g+1,n+1}}}.
$$
Thus $\eta$ is a bijection that induces by construction the bijections
$\eta_I$ and $\eta_{II}$ and the lemma follows.
\end{proof}

%%%%%%%%%%%%%%%%%%%%%%%%%%%%% unclear --has to be carefully rewritten %%%%%%%%%%%%%%%%%%%

%%%
%%%%%%%%%%%%%%%%%%%%%%%%%%%%%%%%%%%%%%%%%%%%%%%%%%%%%%%%%%%%%%%%%%%%%%%%%%%%%%%%%%%%%%%%%
%%%%

\section{The main result}

%%%
%%%%%%%%%%%%%%%%%%%%%%%%%%%%%%%%%%%%%%%%%%%%%%%%%%%%%%%%%%%%%%%%%%%%%%%%%%%%%%%%%%%%%%%%%
%%%

%%%
%%%%%%%%%%%%%%%%%%%%%%%%%%%%%%%%%%%%%%%%%%%%%%%%%%%%%%%%%%%%%%%%%%%%%%%%%%%%%%%%%%%%%%%%%
%%%%
\begin{theorem}\label{T:thm1}
Let $U_{g,n}$ and $B_{g,n}$ denote the sets of unicellular and bicellular maps
containing $n$ edges and genus $g$. Then there is a bijection
\begin{equation}
\beta\colon \dot\bigcup_{0\le g_1\le g+1,\; 0\le j\le n}\left(U_{g_1,j}\times U_{g+1-g_1,n-j}\right)
\;\dot \cup \; B_{g,n}  \longrightarrow U_{g+1, n+1}.
\end{equation}
\end{theorem}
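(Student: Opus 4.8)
The theorem is a bookkeeping consequence of the two lemmas together with the partition (\ref{E:U}). First I would recall that $U_{g+1,n+1}$ decomposes as the disjoint union $U^{I}_{g+1,n+1}\dot\cup\, U^{II}_{g+1,n+1}\dot\cup\, U^{III}_{g+1,n+1}$, and that the set of bicellular maps of genus $g$ with $n$ edges decomposes as $B_{g,n}=B^{I}_{g,n}\dot\cup\, B^{II}_{g,n}$. Then I would define $\beta$ piecewise: on the summand $\dot\bigcup_{0\le g_1\le g+1,\,0\le j\le n}(U_{g_1,j}\times U_{g+1-g_1,n-j})$ set $\beta=\theta$, the bijection of Lemma~\ref{L:lem1} with image $U^{III}_{g+1,n+1}$; on $B_{g,n}$ set $\beta=\eta$, the bijection of Lemma~\ref{L:lem2} with image $U^{I}_{g+1,n+1}\dot\cup\, U^{II}_{g+1,n+1}$.

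Next I would verify that $\beta$ is a bijection. Its domain is by construction a disjoint union of two sets, and its codomain $U_{g+1,n+1}$ is by (\ref{E:U}) the disjoint union of the two blocks $U^{III}_{g+1,n+1}$ and $U^{I}_{g+1,n+1}\dot\cup\, U^{II}_{g+1,n+1}$. Since $\theta$ maps the first summand bijectively onto the first block and $\eta$ maps $B_{g,n}$ bijectively onto the second block, the union map $\beta$ is well defined and bijective: a collision could only occur between the two pieces, but their images are disjoint, and every $u\in U_{g+1,n+1}$ lies in exactly one block and hence has a unique preimage under the corresponding lemma bijection. It remains only to note that the edge and genus indices match — Lemma~\ref{L:lem1} and Lemma~\ref{L:lem2} both produce, from data of total genus $g+1$ (resp. bicellular genus $g$) and total edge number $n$, a unicellular map of genus $g+1$ with exactly $n+1$ edges, the extra edge coming from turning a plant into an edge — so the construction lands precisely in the index $(g+1,n+1)$ of the codomain.

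The only point that requires genuine care is not the combination itself but the inputs: one should check that (\ref{E:U}) is truly a partition, i.e.~exhaustive and pairwise disjoint, for every $u\in U_{g+1,n+1}$, including degenerate configurations such as $\alpha(1)=2$ (where the quantifier conditions over $1<k<\alpha(1)$ become vacuous) and the relative placement of $1$ and $\alpha(1)$ in the order $<_u$. Granting this, the theorem is immediate. Finally I would record eq.~(\ref{E:1}) as an enumerative corollary: writing ${\bf C}_{g}(z)=\sum_{n\ge 0}|U_{g,n}|\,z^{n}$ and ${\bf C}^{[2]}_{g}(z)=\sum_{n\ge 0}|B_{g,n}|\,z^{n}$, comparison of cardinalities in $\beta$ gives
$$
\sum_{g_1=0}^{g+1}\sum_{j=0}^{n}|U_{g_1,j}|\,|U_{g+1-g_1,n-j}|+|B_{g,n}|=|U_{g+1,n+1}|,
$$
and multiplying by $z^{n}$ and summing over $n\ge 0$ yields
$$
\sum_{g_1=0}^{g+1}{\bf C}_{g_1}(z){\bf C}_{g+1-g_1}(z)+{\bf C}^{[2]}_{g}(z)={\bf C}_{g+1}(z)/z,
$$
which is exactly (\ref{E:1}).
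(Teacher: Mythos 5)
Your proposal is correct and follows essentially the same route as the paper: the theorem is obtained by combining the bijection $\theta$ of Lemma~\ref{L:lem1} onto $U^{III}_{g+1,n+1}$ with the bijection $\eta$ of Lemma~\ref{L:lem2} onto $U^{I}_{g+1,n+1}\dot\cup\,U^{II}_{g+1,n+1}$, using the partition~(\ref{E:U}) of the codomain. Your additional remarks on verifying that~(\ref{E:U}) is genuinely a partition and on deriving eq.~(\ref{E:1}) are sensible but go beyond what the paper's own (very terse) proof records.
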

%%%
%%%%%%%%%%%%%%%%%%%%%%%%%%%%%%%%%%%%%%%%%%%%%%%%%%%%%%%%%%%%%%%%%%%%%%%%%%%%%%%%%%%%%%%%%
%%%%
\begin{proof}
We have shown in Lemma~\ref{L:lem1} and Lemma~\ref{L:lem2} that there are bijections
\begin{eqnarray*}
\theta \colon \dot\bigcup_{0\le g_1\le g+1,\; 0\le j\le n+1}\left(U_{g_1,j}\times
U_{g+1-g_1,n-j}\right) & \longrightarrow &  U^{III}_{g+1,n+1} \\
\eta \colon B_{g, n+1}
 & \longrightarrow & U^{I}_{g+1,n+1}\dot\cup \; U^{II}_{g+1,n+1}.
\end{eqnarray*}
These two bijections determine
$$
\beta\colon \dot\bigcup_{g_1, j}\left(U_{g_1,j}\times  U_{g+1-g_1,n-j}\right)
\;\dot \cup \; B_{g,n}  \longrightarrow U_{g+1, n+1},
$$
whence the theorem.
\end{proof}

An immediate enumerative corollary of this bijection is the following result:

%%%
%%%%%%%%%%%%%%%%%%%%%%%%%%%%%%%%%%%%%%%%%%%%%%%%%%%%%%%%%%%%%%%%%%%%%%%%%%%%%%%%%%%%%%%%%
%%%%
\begin{corollary}\label{C:2to1}
The generating function of unicellular and bicellular maps,
${\bf C}_{g}(z)$ and ${\bf C}_{g}^{[2]}(z)$ satisfy the following equation
\begin{equation}
\sum_{g_1=0}^{g+1}\,{\bf C}_{g_1}(z){\bf C}_{g+1-g_1}(z)
+{\bf C}_{g}^{[2]}(z)= {\bf C}_{g+1}(z)/z,
\end{equation}
which equivalent to the coefficient equation
\begin{equation}\label{E:recursion}
\sum_{g_1=0}^{g+1}\,\sum_{i \geq 0}^n\,{\sf c}_{g_1}(i) {\sf c}_{g+1-g_1}(n-i)
+ {\sf c}_g^{[2]}(n)={\sf c}_{g+1}(n+1)
\end{equation}
\end{corollary}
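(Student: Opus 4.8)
The plan is to derive Corollary~\ref{C:2to1} as a direct enumerative consequence of the bijection $\beta$ in Theorem~\ref{T:thm1}, by passing to generating functions. First I would recall the definitions: write ${\sf c}_g(n)$ for the number of unicellular maps of genus $g$ with $n$ edges, ${\sf c}_g^{[2]}(n)$ for the number of bicellular maps of genus $g$ with $n$ edges, and set ${\bf C}_g(z)=\sum_{n\ge 0}{\sf c}_g(n)z^n$ and ${\bf C}_g^{[2]}(z)=\sum_{n\ge 0}{\sf c}_g^{[2]}(n)z^n$. Because $\beta$ is a bijection between the disjoint union $\dot\bigcup_{0\le g_1\le g+1,\,0\le j\le n}\bigl(U_{g_1,j}\times U_{g+1-g_1,n-j}\bigr)\;\dot\cup\; B_{g,n}$ and $U_{g+1,n+1}$, comparing cardinalities of both sides gives exactly the coefficient identity
\begin{equation*}
\sum_{g_1=0}^{g+1}\sum_{i=0}^{n}{\sf c}_{g_1}(i){\sf c}_{g+1-g_1}(n-i)+{\sf c}_g^{[2]}(n)={\sf c}_{g+1}(n+1),
\end{equation*}
which is precisely eq.~(\ref{E:recursion}). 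Here the inner convolution counts the product sets $U_{g_1,i}\times U_{g+1-g_1,n-i}$ summed over the admissible splittings of the $n$ edges and the genus, while ${\sf c}_g^{[2]}(n)=|B_{g,n}|$ and ${\sf c}_{g+1}(n+1)=|U_{g+1,n+1}|$.

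Next I would translate this coefficient identity into the generating-function statement. Multiplying eq.~(\ref{E:recursion}) by $z^{n+1}$ and summing over $n\ge 0$: the left-hand convolution term becomes $z\sum_{g_1=0}^{g+1}\sum_{n\ge0}\bigl(\sum_{i=0}^n{\sf c}_{g_1}(i){\sf c}_{g+1-g_1}(n-i)\bigr)z^n = z\sum_{g_1=0}^{g+1}{\bf C}_{g_1}(z){\bf C}_{g+1-g_1}(z)$ by the Cauchy product formula; the bicellular term becomes $z\,{\bf C}_g^{[2]}(z)$; and the right-hand side becomes $\sum_{n\ge0}{\sf c}_{g+1}(n+1)z^{n+1}={\bf C}_{g+1}(z)-{\sf c}_{g+1}(0)$. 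One must handle the constant term: there are no unicellular maps with $0$ edges of positive genus, and for $g\ge 0$ either ${\sf c}_{g+1}(0)=0$ (when $g+1\ge1$, which always holds), so ${\bf C}_{g+1}(z)-{\sf c}_{g+1}(0)={\bf C}_{g+1}(z)$. Dividing through by $z$ then yields
\begin{equation*}
\sum_{g_1=0}^{g+1}{\bf C}_{g_1}(z){\bf C}_{g+1-g_1}(z)+{\bf C}_g^{[2]}(z)={\bf C}_{g+1}(z)/z,
\end{equation*}
which is the asserted identity, and eq.~(\ref{E:recursion}) is literally the equality of $z^{n+1}$-coefficients on both sides, so the two formulations are equivalent.

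The only genuinely delicate points, and where I would spend the most care, are bookkeeping issues rather than deep arguments: making sure the index ranges in the disjoint union defining the domain of $\beta$ match exactly the double sum $\sum_{g_1=0}^{g+1}\sum_{i=0}^n$ (in particular that the edge count splits as $j+(n-j)=n$ and the genus as $g_1+(g+1-g_1)=g+1$, consistent with the Euler-characteristic computations already carried out in Lemmas~\ref{L:lem1} and~\ref{L:lem2}), and correctly accounting for the factor of $z$ coming from the shift $n\mapsto n+1$ in the number of edges — this shift reflects the fact that, under $\theta$ and $\eta$, a plant is converted into a genuine edge, adding one edge. I expect no substantive obstacle: once Theorem~\ref{T:thm1} is in hand, the corollary is a routine passage from a cardinality identity to its generating-function form via the Cauchy product, and the main task is simply to present the index manipulations cleanly.
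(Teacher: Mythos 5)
Your proposal is correct and follows exactly the route the paper intends: the paper presents the corollary as an immediate enumerative consequence of the bijection $\beta$ of Theorem~\ref{T:thm1}, which is precisely your cardinality comparison followed by the standard Cauchy-product passage to generating functions. Your extra care about the constant term ${\sf c}_{g+1}(0)=0$ and the edge-count shift $n\mapsto n+1$ fills in bookkeeping the paper leaves implicit, but introduces nothing different in substance.
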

%%%
%%%%%%%%%%%%%%%%%%%%%%%%%%%%%%%%%%%%%%%%%%%%%%%%%%%%%%%%%%%%%%%%%%%%%%%%%%%%%%%%%%%%%%%%%
%%%%

Corollary~\ref{C:2to1} can be proved using either (a) the matrix model 
\cite{Dyson, Schwinger} and in particular the Schwinger-Dyson equation or 
(b) representation theory \cite{Sagan,Zagier}.

%\bibliography{uni-bi}
% \bibliographystyle{plain}  % Style BST file

\end{document}